\documentclass[12pt]{amsart}
\usepackage[all]{xy}
\usepackage[mathscr]{euscript}
\usepackage{hyperref}
\hypersetup{colorlinks=true,urlcolor=blue,citecolor=blue,linkcolor=blue}
\usepackage{courier}
\usepackage{amsmath,amscd,amssymb,amsthm,latexsym,longtable,diagram, picture}
\usepackage{graphicx}
\usepackage{array}
\usepackage{color}
\usepackage{enumerate}
\usepackage{nicefrac}
\usepackage{listings}
\usepackage{latexsym,bm,bbm,mathrsfs}
\usepackage{hyperref,graphicx, enumerate}
\usepackage{epsfig, xcolor, graphicx}
\usepackage[shortlabels]{enumitem}
\usepackage{indentfirst, setspace}
\usepackage{tikz-cd}

\allowdisplaybreaks
\usepackage{caption}
\usepackage{cancel}
\usepackage[normalem]{ulem}

\usetikzlibrary{calc,matrix,arrows,decorations.markings}

\newcommand{\bbp}{\mathbb{P}}
\newcommand{\bbc}{\mathbb{C}}

\newcommand{\bba}{\mathbb{A}}

\newcommand{\mat}{\begin{pmatrix}}
\newcommand{\emat}{\end{pmatrix}}

\newtheorem{theorem}{Theorem}[section]
\newtheorem{proposition}[theorem]{Proposition}
\newtheorem{corollary}[theorem]{Corollary}
\newtheorem{lemma}[theorem]{Lemma}
\newtheorem{rmk}[theorem]{Remark}

\begin{document}
 \title[Additive Rigidity for Images of Rational Points on Abelian Varieties]
{Additive Rigidity for Images of Rational Points on Abelian Varieties II: The General Case}

\author{Seokhyun Choi}

\address{
Dept. of Mathematical Sciences, KAIST,
291 Daehak-ro, Yuseong-gu,
Daejeon 34141, South Korea
}
\email{sh021217@kaist.ac.kr}

\date{\today}
\subjclass[2020]{Primary 11G05, 11G10}
\keywords{Abelian varieties, Elliptic curves, Additive structures, Mordell-Lang conjecture}

\begin{abstract}
    We study the interaction between the group law on an abelian variety and the additive structure induced on its image under a morphism to a projective space. Let $A/F$ be an abelian variety, $f:A \rightarrow \mathbb{P}^n$ be a morphism which is finite onto its image, and $\Gamma \subseteq A(F)$ be a finite-rank subgroup. We show that for any affine chart $\mathbb{A}^n \subseteq \mathbb{P}^n$ and any finite subset $X \subseteq f(\Gamma) \cap \mathbb{A}^n$, the energy satisfies $E(X) \ll \lvert X \rvert^2$ and the sumset satisfies $\lvert X+X \rvert \gg \lvert X \rvert^2$. Thus images of finite-rank subgroups of abelian varieties cannot have strong additive structure in affine space. This removes the simplicity assumption from the author's previous result. The proof combines the uniform Mordell--Lang conjecture of Gao--Ge--K\"{u}hne with a refined use of the Ueno locus, R\'{e}mond's boundedness theorem for abelian subvarieties of bounded degree, and induction on the dimension of $A$.
\end{abstract}

\maketitle

\section{Introduction}

Many problems in number theory arise from the interaction of distinct algebraic structures. In the setting of abelian varieties, one has the intrinsic group law on the abelian variety, while a morphism to a projective space, after restricting to an affine chart, produces subsets of an affine space with its usual addition. The theme of this paper is that these two additive structures do not interact freely. More precisely, images of finite rank subgroups of abelian varieties under projective morphisms satisfy strong restrictions on their additive structures.

This paper is a continuation of the author's earlier work \cite{Cho26}, where the corresponding additive rigidity statement was proved for simple abelian varieties. The result showed that if $A/F$ is a simple abelian variety, $f:A \rightarrow \bbp^n$ is a morphism which is finite onto its image, and $\Gamma \subseteq A(F)$ is a finite-rank subgroup, then finite subsets $X$ of $f(\Gamma)$ in an affine chart $\bba^n$ satisfy $E(X) \ll \lvert X \rvert^2$ and $\lvert X+X \rvert \gg \lvert X \rvert^2$. The natural question left open by \cite{Cho26} was whether the simplicity assumption on $A$ is actually necessary. The main result of the present paper shows that the simplicity assumption is not necessary: the same additive rigidity phenomenon holds for arbitrary abelian varieties.

A second source of motivation comes from the work of Harrison, Mudgal, and Schmidt \cite{HMS26} on algebraic groups. Their results show that, for correspondences $\mathcal{C}_1,\ldots,\mathcal{C}_g$ between one-dimensional algebraic groups, and for a finite subset $X$ contained in a finite-rank subgroup, $\lvert \mathcal{C}_1(X)+\cdots+\mathcal{C}_g(X) \rvert \gg \lvert X \rvert^g$ under suitable non-degeneracy hypotheses.  From this perspective, our theorem can be viewed as a higher-dimensional analogue in the setting of abelian varieties, with the two-fold sumset $X+X$.

The main new point of this paper is geometric. In the simple case, the structure of abelian subvarieties of $A^2$ is rigid enough to control the exceptional fibers directly. For an arbitrary abelian variety this is no longer true, and many lower-dimensional abelian subvarieties may occur. We overcome this obstruction by using the work of Gao--Ge--Kühne \cite{GGK21} more finely. In addition to the uniform Mordell--Lang bound itself, we use the uniform control of the Ueno locus implicit in their argument. This allows us to separate the non-coset part of each fiber from its positive-dimensional coset contributions. R\'{e}mond's theorem on abelian subvarieties of bounded degree \cite{Rem00} is then used to make the collection of possible abelian subvarieties finite. Together, these inputs replace the simplicity assumption and make an induction on the dimension of $A$ possible.

For a finite subset $X$ of a torsion-free abelian group, the sumset $X+X$ is defined by 
\[X+X := \{a+b \:|\: a,b \in X\},\]
and the energy of $X$ is defined by 
\[E(X) := \lvert \{(a,b,c,d) \in X^4\:|\:a+b=c+d\} \rvert.\]

Our main result is the following.

\begin{theorem}\label{main_theorem}
    Let $A/F$ be an abelian variety of dimension $g$ over an algebraically closed field $F$ of characteristic 0. Let $f:A \rightarrow \bbp^n$ be a morphism which is finite of degree $d$ onto its image, and let $t$ denote the projective degree of $f(A)$ in $\bbp^n$. Let $\Gamma$ be a subgroup of $A(F)$ of finite rank $r$. Then there exists a constant $C(g,d,t)>0$ with the following property.
    
    For every affine chart $\bba^n \subseteq \bbp^n$ and every finite subset $X \subseteq f(\Gamma) \cap \bba^n$, 
    \[E(X) \leq C(g,d,t)^{1+r}\lvert X \rvert^2,\qquad \lvert X+X \rvert \geq \left(C(g,d,t)^{-1}\right)^{1+r}\lvert X \rvert^2.\]
\end{theorem}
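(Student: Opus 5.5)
By Cauchy--Schwarz, $\lvert X+X\rvert \ge \lvert X\rvert^4/E(X)$, so the sumset bound follows from the energy bound. We therefore focus on $E(X)$. Write $E(X)=\sum_{s} r(s)^2$, where $r(s)=\#\{(a,b)\in X^2 : a+b=s\}$. The trivial quadruples, those with $\{a,b\}=\{c,d\}$, contribute at most $2\lvert X\rvert^2$. It therefore suffices to bound the number of quadruples $(a,b,c,d)$ with $a+b=c+d$ that are nontrivial.

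Lift each such quadruple to $(P,Q,R,S)\in\Gamma^4$ with $f(P)=a$, and so on; each point of $X$ has at most $d$ preimages. Consider the subvariety
\[
V=\{(P,Q,R,S)\in A^4 : f(P)+f(Q)=f(R)+f(S)\ \text{in } \bba^n\},
\]
and fibre it over $(P,R)$ or a similar pair. For fixed $(P,R)$ the fibre
\[
V_{P,R}=\{(Q,S) : f(Q)-f(S)=f(R)-f(P)\}
\]
is a subvariety of $A^2$, cut out by equations of degree controlled by $t$ after taking closure in $(\bbp^n)^2$. Its degree is therefore bounded in terms of $(g,d,t)$. We then apply the uniform Mordell--Lang bound of Gao--Ge--K\"uhne to $V_{P,R}\cap\Gamma^2$. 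Outside its Ueno locus, the fibre meets $\Gamma^2$ in at most $C^{1+r}$ points. Summing over the $\le\lvert X\rvert^2$ pairs $(a,c)$ then gives $C^{1+r}\lvert X\rvert^2$ for the non-degenerate part.

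The Ueno part is the main obstacle. It is a union of cosets $x+B$ of positive-dimensional abelian subvarieties $B\subseteq A^2$, contained in $V_{P,R}$. By the uniform control of the Ueno locus implicit in \cite{GGK21}, the number of maximal such cosets and their degrees are bounded in terms of $(g,d,t)$. By R\'emond's theorem \cite{Rem00}, only finitely many abelian subvarieties $B$ of bounded degree occur, with the number bounded in terms of $g$ and the degree bound. On a coset $x+B\subseteq V_{P,R}$ the map $(Q,S)\mapsto f(Q)-f(S)$ is constant. Hence, projecting $B$ to the two factors, we get an abelian subvariety $B'\subseteq A$ (nonzero, by the finiteness of $f$) along which $f$ and translates of $f$ differ by a constant vector.

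We handle each of the finitely many $B'$ as follows. If $B'=A$ then $f$ is, up to translation, affine-additive on $A$. But a projective $A$ has no nonconstant additive map to $\bba^n$, so this case only produces trivial configurations. If $B'$ is a proper subvariety, the quadruples lying in the coset contributions are counted through the quotient $A\to A/B'$, or alternatively by restricting $f$ to cosets of $B'$. We then apply the induction hypothesis in dimension $<g$ to $B'$ with the restricted morphism, whose degree and image degree are bounded in terms of $(d,t,g)$, and to the finite-rank group $\Gamma\cap B'$ translated appropriately. Summing over cosets with a Cauchy--Schwarz or Hölder step yields again $C^{1+r}\lvert X\rvert^2$.

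Making this summation over cosets lose only a constant factor, rather than a power of $\lvert X\rvert$, is where I expect most of the care to lie. The plan is to use the fact that the energy restricted to points lying in distinct translates of $B'$ is controlled by the additive rigidity of $f$ across cosets, and then to induct on $g$ with the constants compounding finitely many times.
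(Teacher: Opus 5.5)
Your architecture matches the paper's: uniform Mordell--Lang off the Ueno locus, the uniform Ueno bounds of Gao--Ge--K\"uhne together with R\'emond to make the relevant $B\subseteq A^2$ finitely many, a split into $\dim B=g$ versus $\dim B<g$, and induction on $g$. But both steps that carry the proof are missing.

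\textbf{The case $\dim B=g$.} Write $\alpha=p_1^A|_B$ and $\beta=p_2^A|_B$, which are isogenies onto $A$. A coset $(a,b)+B$ in a fibre gives the functional equation $f(a+\alpha x)-f(b+\beta x)\equiv v$. This is not additivity of $f$ in any sense: for $\alpha\neq\beta$ it links $f$ to itself through two different isogenies. So the fact that $A$ has no nonconstant additive map to $\bba^n$ does not apply, and ``only trivial configurations'' is unproved. The paper proves something else instead (Proposition~\ref{Sigma_finite}, via \cite[Proposition~5.1]{Cho26}): the set $\Sigma$ of fibre values admitting such a coset has at most $c_6(g,d,t)$ elements. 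It then bounds $N_u\le\lvert X\rvert$ trivially for $u\in\Sigma$. In your difference fibration you would need the analogous bounded-exceptional-set statement for $v$.

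\textbf{The case $\dim B<g$.} Here you leave the decisive step open. Let $r_-(v)$ be the number of $(b,d)\in X^2$ with $b-d=v$, and $r_-^\bullet(v)$ its coset part. Counting per pair $(a,c)$ gives $\sum_v r_-(v)\,r_-^\bullet(v)$, so you must control squared fibre counts.

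The paper does this as follows.
\begin{enumerate}
\item It works with $\sum_u\mathcal{N}_u^2$ and splits $\mathcal{N}_u\le\mathcal{N}_u^\circ+\mathcal{N}_u^\bullet$.
\item It applies Cauchy--Schwarz using two bounds: at most $c_1$ subvarieties $B$ per fibre, and at most $c_3^{1+r}$ cosets of each $B$ \emph{meeting} $\Gamma^2$. The total number of maximal cosets can be infinite, so your claim that it is bounded is wrong. This reduces everything to $\sum_u\sum_j\mathcal{N}_{u,B,j}^2$ for each fixed $B$.
\item It uses both projections $B_1=p_1^A(B)$ and $B_2=p_2^A(B)$. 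These can differ, so a single $B'$ does not suffice. Each coset $z_{u,B,j}+B$ lies in some $R_1\times R_2$ with $R_i\in A/B_i$, and distinct cosets of $B$ are disjoint, which gives
\[\sum_u\sum_j\mathcal{N}_{u,B,j}^2\le d^4t^4\sum_{R_1,R_2}E(X_{R_1},X_{R_2})\le d^4t^4\sum_{R_1,R_2}E(X_{R_1})^{1/2}E(X_{R_2})^{1/2}.\]
\item Induction applies to each $X_{R_i}$ after translating by a base point of $\mathcal{X}_{R_i}$ and restricting $f$ to $B_i$. The image degree is controlled by $\deg_{\mathcal{L}_0}B_1\le\deg_{\mathcal{L}}B\le c_2(g,d,t)$. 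This yields $E(X_{R_i})\le c^{1+r}\lvert X_{R_i}\rvert^2$.
\item Finally $\sum_{R_1,R_2}\lvert\mathcal{X}_{R_1}\rvert\lvert\mathcal{X}_{R_2}\rvert=\lvert\mathcal{X}\rvert^2$, because the cosets partition $\mathcal{X}$.
\end{enumerate}
This mixed-energy-plus-partition step is exactly what avoids losing a power of $\lvert X\rvert$. Your appeal to ``additive rigidity of $f$ across cosets'' does not supply it.
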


\begin{rmk}\label{Cauchy_Schwarz}
    By the Cauchy-Schwarz inequality, 
    \[\lvert X \rvert^4\leq E(X) \lvert X+X \rvert.\]
    Therefore, it suffices to establish the bound 
    \[E(X) \leq C(g,d,t)^{1+r}\lvert X \rvert^2\]
    in Theorem~\ref{main_theorem}.
\end{rmk}

In additive combinatorics, finite sets with small sumset or large energy are regarded as highly additively structured. For instance, Freiman-type theorems show that sets with small doubling are controlled by generalized arithmetic progressions. Theorem~\ref{main_theorem} says that such additive structure cannot occur inside $f(\Gamma) \cap \mathbb A^n$, up to constants depending only on the geometric data and the rank. 

Theorem~\ref{main_theorem} also yields a particularly clean consequence for torsion points, since $\Gamma$ has rank 0 in this case.

\begin{corollary}\label{torsion_corollary}
    Let $A/F$ and $f:A \rightarrow \bbp^n$ be as in Theorem~\ref{main_theorem}. Then for every affine chart $\bba^n \subseteq \bbp^n$ and every finite subset $X \subseteq f(A(F)_{\mathrm{tors}})\cap \bba^n$, 
    \[E(X) \leq C(g,d,t) \lvert X \rvert^2,\qquad \lvert X+X \rvert \geq C(g,d,t)^{-1}\lvert X \rvert^2.\]
\end{corollary}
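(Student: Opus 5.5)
This follows directly from Theorem~\ref{main_theorem}; we only have to check that its hypotheses hold with $r=0$. Take $\Gamma := A(F)_{\mathrm{tors}}$. It is a subgroup of $A(F)$: it contains $0$, and it is closed under negation and addition, since if $mP=0$ and $nQ=0$ then $mn(P+Q)=0$. Its rank is $\dim_{\mathbb{Q}} \Gamma\otimes_{\mathbb{Z}}\mathbb{Q}$, and this is $0$ because every element of $\Gamma$ is torsion and so becomes zero after tensoring with $\mathbb{Q}$. So $\Gamma$ is a subgroup of finite rank $r=0$, as Theorem~\ref{main_theorem} requires. We do not need $\Gamma$ to be finitely generated; indeed $A(F)_{\mathrm{tors}}$ is infinite when $g\geq 1$. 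The theorem only asks for finite rank in the sense just described, which is also the sense used by the uniform Mordell--Lang input it relies on.

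Now fix an affine chart $\bba^n\subseteq\bbp^n$ and a finite subset $X\subseteq f(A(F)_{\mathrm{tors}})\cap\bba^n = f(\Gamma)\cap\bba^n$. The abelian variety $A$, the morphism $f$ and its data $g,d,t$ are unchanged. Theorem~\ref{main_theorem} therefore gives
\[E(X)\leq C(g,d,t)^{1+0}\lvert X\rvert^2 = C(g,d,t)\lvert X\rvert^2,\]
and
\[\lvert X+X\rvert\geq \left(C(g,d,t)^{-1}\right)^{1+0}\lvert X\rvert^2 = C(g,d,t)^{-1}\lvert X\rvert^2.\]

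The sumset bound also follows from the energy bound by Remark~\ref{Cauchy_Schwarz}: from $\lvert X\rvert^4\leq E(X)\lvert X+X\rvert$ we get $\lvert X+X\rvert\geq \lvert X\rvert^4/(C(g,d,t)\lvert X\rvert^2) = C(g,d,t)^{-1}\lvert X\rvert^2$ (the case $X=\emptyset$ is trivial).

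The only point needing care is that the constant $C(g,d,t)$ in Theorem~\ref{main_theorem} is independent of $\Gamma$ and depends on $r$ only through the exponent $1+r$. Setting $r=0$ is therefore legitimate, and the corollary's constant is exactly $C(g,d,t)$ with no dependence on the torsion subgroup.
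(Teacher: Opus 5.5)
Your proposal is correct and matches the paper's argument: the paper obtains the corollary simply by observing that $\Gamma = A(F)_{\mathrm{tors}}$ is a subgroup of rank $0$ and applying Theorem~\ref{main_theorem} with $r=0$, exactly as you do. Your extra checks are accurate but go beyond what the paper needs, since the theorem already states the sumset bound. These are that finite rank means $\dim_{\mathbb{Q}}\Gamma\otimes\mathbb{Q}<\infty$, so the infinite torsion subgroup qualifies, and the Cauchy--Schwarz derivation of the sumset bound.
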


We now describe the proof. Fix an affine chart $\bba^n \subseteq \bbp^n$, set $V=f^{-1}(\bba^n)$, and consider the morphism
\[\Phi : V \times V \longrightarrow \bba^n,\quad (P,Q) \longmapsto f(P)+f(Q).\]
For each $u \in \bba^n$, we define 
\[Y_u:=\overline{\Phi^{-1}(u)} \subseteq A^2.\]
The energy of a finite set $X \subseteq f(\Gamma) \cap \bba^n$ is then controlled by the number of pairs $(P,Q) \in \Gamma^2$ lying on these fibers $Y_u(F)$. The uniform Mordell--Lang conjecture controls the part of $Y_u$ outside its Ueno locus. The remaining problem is to understand the positive-dimensional cosets contained in the fibers.

The coset contribution splits into two parts. First, if a coset $z+B \subseteq Y_u$ has $\dim B=g$ and meets $V^2$, then the situation is reduced to the same functional equation appearing in \cite[Proposition~5.1]{Cho26},
\[f(a+\alpha P)+f(b+\beta P) \equiv u,\]
and the argument of \cite[Proposition~5.1]{Cho26} shows that such $u$ form a finite exceptional set. Second, if $\dim B < g$, then the two projections of $B$  give two proper abelian subvarieties of $A$. After translating the relevant cosets, the contribution from such $B$ is bounded by the induction hypothesis in smaller dimension. 

The paper is organized as follows. In Section~\ref{Geometric_lemmas}, we collect the geometric lemmas from Gao--Ge--K\"{u}hne and R\'{e}mond. In Section~\ref{Setup}, we set up the fibers $Y_u$, decompose the set $Y_u(F) \cap \Gamma^2$, and prove the finiteness of the exceptional set $\Sigma$. In Section~\ref{Induction}, we prove the induction estimate for lower-dimensional cosets. Finally, in Section~\ref{Proof} we combine these estimates to prove Theorem~\ref{main_theorem}.

\section{Geometric lemmas}\label{Geometric_lemmas}

We collect here the geometric lemmas needed in the proof. The two main inputs are the uniform Mordell--Lang conjecture of Gao--Ge--K\"{u}hne and R\'{e}mond's boundedness theorem for abelian subvarieties of bounded degree.

Let $A/F$ be an abelian variety of dimension $g$ over an algebraically closed field $F$ of characteristic 0. Let $\mathcal{L}$ be an ample line bundle on $A$ and let $X$ be an irreducible closed subvariety of $A$.

The Ueno locus of $X$ is defined to be the union of positive dimensional cosets contained in $X$. By Kawamata \cite[Theorem~4]{Kaw80}, the Ueno locus of $X$ is Zariski closed. Let $X^\circ$ be the complement of the Ueno locus of $X$. Following \cite[Lemma~7.4]{GGK21}, we define $\Sigma(X;A)$ to be the set of positive dimensional abelian subvarieties $B$ satisfying $x+B \subseteq X$ for some $x \in A(F)$, and $B$ is maximal for this property. 

The following lemma is the main form in which the deep results of Gao--Ge--K\"{u}hne enter this paper. It combines the uniform Mordell--Lang estimate in \cite[Theorem~1.1$'$]{GGK21} with the uniform description of the Ueno locus in \cite[Lemma~7.4]{GGK21}.

\begin{lemma}\label{Uniform_Mordell_Lang_conjecture}
    Let $A/F$ be a polarized abelian variety of dimension $g$, $X$ be an irreducible closed subvariety of degree $d$ (with respect to the polarization), and $\Gamma$ be a subgroup of $A(F)$ of finite rank $r$. Then there exist constants $c_i(g,d)>0$ with the following property.
    \begin{enumerate}[(i)]
        \item $\lvert \Sigma(X;A) \rvert \leq c_1(g,d)$.
        \item For each $B \in \Sigma(X;A)$, $\deg_\mathcal{L} B \leq c_2(g,d)$.
        \item For each $B \in \Sigma(X;A)$, the number of cosets $x+B \subseteq X$ which meet $\Gamma$ is at most $c_3(g,d)^{1+r}$.
        \item $\lvert X^\circ(F) \cap \Gamma \rvert \leq c_4(g,d)^{1+r}$.
    \end{enumerate}
\end{lemma}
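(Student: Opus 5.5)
We will derive this lemma directly from \cite{GGK21}, essentially by reading off the constants that their argument produces.

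\textbf{Parts (i) and (ii).} These come from the uniform description of the Ueno locus in \cite[Lemma~7.4]{GGK21}. The approach runs as follows.

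First, we use the Chow variety (or Hilbert scheme) of subvarieties of degree $d$ in the ambient projective space given by $\mathcal{L}^{\otimes 3}$ (or a fixed multiple). This shows that all pairs $(A,X)$ with $\dim A=g$ and $\deg_{\mathcal{L}}X=d$ fit into finitely many families, where the number of families depends only on $(g,d)$. Concretely, we take a family $\mathcal{X}\subseteq\mathcal{A}\to S$ over the moduli of polarized abelian varieties with level structure, possibly after a finite base change, together with a Hilbert scheme component.

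Second, by \cite[Lemma~7.4]{GGK21}, which is built on Kawamata's structure theorem applied in families and the constructibility of the Ueno locus, the maximal abelian subvarieties $B$ with a translate in a fiber $X_s$ vary in finitely many families over a constructible stratification of $S$. This gives the uniform bound $c_1(g,d)$ on $\lvert\Sigma(X;A)\rvert$.

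Third, on each stratum the subgroup schemes $B_s$ form a flat family. Their degrees are therefore locally constant, which gives $c_2(g,d)$. Alternatively, one can bound $\deg B$ directly: the translate $x+B$ lies in $X$, so the stabilizer-type variety $\{a: a+X\supseteq x+B\}$ is cut out by equations of bounded degree, and R\'emond's bounds control its degree.

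\textbf{Part (iii).} Fix $B\in\Sigma(X;A)$ and pass to the quotient $\pi:A\to A/B$. The set of $y\in A/B$ with $\pi^{-1}(y)\subseteq X$ is a closed subvariety $Z\subseteq A/B$. Since $B$ is maximal, $Z$ contains no positive-dimensional coset through points we care about in a way that enlarges $B$. In fact, any coset $y+C\subseteq Z$ would give $\pi^{-1}(y+C)\subseteq X$, contradicting maximality. So $Z$ has empty Ueno locus and $Z=Z^\circ$.

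The cosets $x+B\subseteq X$ meeting $\Gamma$ correspond to the points of $Z\cap\pi(\Gamma)$. The group $\pi(\Gamma)$ has rank at most $r$. The degree of $A/B$ and of $Z$, with respect to a polarization induced from $\mathcal{L}$, are bounded in terms of $g$, $d$ and $c_2$. Applying \cite[Theorem~1.1$'$]{GGK21} to $Z$ then gives the bound $c_3(g,d)^{1+r}$.

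\textbf{Part (iv).} Here we apply \cite[Theorem~1.1$'$]{GGK21} directly. In the form ``$\lvert X^\circ(F)\cap\Gamma\rvert\le c(g,d)^{1+r}$'' it is precisely this statement.

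\textbf{Main obstacle.} The hard step is in (iii): polarizing $A/B$ so that the degrees of $A/B$ and $Z$ are bounded uniformly. The plan is to bypass this. Instead of working on $A/B$, we replace the quotient by a complementary abelian subvariety $B'\subseteq A$, meaning $B+B'=A$ with $B\cap B'$ finite of bounded order. Poincaré reducibility, with degrees controlled by $\deg B$ and $\deg\mathcal{L}$, makes this possible. We then work with $Z'=\{y\in B': y+B\subseteq X\}$, an intersection of translates of $X$ inside $B'$. This $Z'$ has bounded degree, since it is a finite intersection of translates by Noetherian/Bezout arguments. Counting points of $Z'$ in $\Gamma'$ loses only a factor of $\lvert B\cap B'\rvert$, which is bounded.
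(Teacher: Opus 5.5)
\textbf{Gap: the constants must not depend on $\deg_{\mathcal L}A$, and your proposal implicitly lets them.} They may depend only on $g$ and $d=\deg_{\mathcal L}X$. When $X$ does not generate $A$, nothing bounds the polarization type of $(A,\mathcal L)$. For example, $X=E\times\{0\}\subseteq E\times E'$ has degree $1$ for $\mathcal L=\mathcal O_E(0)\boxtimes\mathcal M$, with $\deg\mathcal M$ arbitrary.

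So your first step for (i) and (ii) is false as stated: the pairs $(A,X)$ do not lie in finitely many families whose number depends only on $(g,d)$. R\'emond's count of abelian subvarieties of bounded degree also depends on $\deg_{\mathcal L}A$ (compare Lemma~\ref{Remond_lemma}), so it cannot give $c_1(g,d)$ directly.

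The missing idea is the reduction to the generating case, which is the paper's own contribution beyond citations:
\begin{itemize}
\item Let $A'$ be the abelian subvariety generated by $X-X$, and pick $Q\in X(F)$ with $X-Q$ generating $A'$.
\item Any coset $x+B\subseteq X$ gives $B\subseteq X-X\subseteq A'$. Hence $\Sigma(X;A)=\Sigma(X-Q;A')$, with the same degrees.
\item Once $X-Q$ generates $A'$, the degree of $A'$ is controlled by $(g,d)$. This is what makes the cited inputs (Bogomolov, R\'emond, \cite{GGK21}), or your family argument, uniform.
\item If (iii) is also done inside $A'$, replace $\Gamma$ by $\langle\Gamma,Q\rangle\cap A'(F)$, of rank $\le r+1$. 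This only turns $c^{1+r}$ into $c^{2+r}$.
\end{itemize}
Your alternative degree bound for $B$ also needs repair. The set $\{a : x+B\subseteq a+X\}$ is a translate of $-W$, where $W=\{x : x+B\subseteq X\}$, and $B$ need not be a component of it. You should instead realize $B$ as the identity component of the stabilizer of a component of $W$.

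\textbf{Part (iii) is otherwise sound.} It is essentially the variety $X_B$ constructed in \cite{GGK21}, which the paper simply cites. Maximality of $B$ does force $Z'=\{y\in B' : y+B\subseteq X\}$ to contain no positive-dimensional coset. Cosets meeting $\Gamma$ do inject into points of $Z'$ lying in the preimage in $B'$ of the image of $\Gamma$ in $A/B$, a group of rank $\le r$.

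However, your degree bound for $Z'$ has the same defect. Bezout inside $B'$ involves $\deg_{\mathcal L}B'$, which grows with $\deg_{\mathcal L}A$. You can either reduce to the generating case first, or bound $\deg Z'$ intrinsically:
\begin{itemize}
\item Take $B'$ to be the complement of $B$ with respect to $\mathcal L$, and $\mu:B\times B'\to A$ the addition isogeny.
\item Since $W+B=W$, one has $\mu^{-1}(W)=B\times Z'$ and $\mu^*\mathcal L\equiv\mathcal L|_B\boxtimes\mathcal L|_{B'}$.
\item Hence $\deg Z'\le\lvert B\cap B'\rvert\deg W$, with $\lvert B\cap B'\rvert\le(\deg_{\mathcal L}B)^2$.
\item $\deg W$ is bounded by a Bezout-type bound for intersections of translates of $X$, independently of $A$.
\end{itemize}
This route has a small advantage over the paper's: it avoids translating by $Q$, so the rank stays $r$.
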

\begin{proof}
    First, (iv) is exactly \cite[Theorem~1.1$'$]{GGK21}, so we will only prove (i), (ii), and (iii).
    
    Assume first that $X$ generates $A$. Then the assertions are contained in the proof of \cite[Lemma~7.4]{GGK21}. Indeed, by Bogomolov \cite[Theorem~1]{Bog80}, R\'{e}mond \cite[Proposition~4.1]{Rem00}, and \cite[Lemma~2.5]{GGK21}, (i) and (ii) are proved. Fix $B \in \Sigma(X;A)$ and let $n_B$ be the number of cosets $x+B \subseteq X$ which meet $\Gamma$. Gao--Ge--K\"{u}hne constructs a closed subvariety $X_B$ such that $\deg_\mathcal{L} X_B \leq c_5(g,d)$ and $n_B \leq \lvert X_B^\circ(F) \cap \Gamma \rvert$. Then (iv) applied to irreducible components of $X_B$ proves (iii).

    We now prove the general case. As in the end of \cite[Section~7.2]{GGK21}, we let $A'$ be the abelian subvariety of $A$ generated by $X-X$. Then there exists a closed point $Q \in X(F)$  such that $X-Q$ generates $A'$. Note that $\dim A' \leq \dim A=g$ and $\deg_\mathcal{L} (X-Q) = \deg_\mathcal{L} X=d$. Let $\Gamma'$ be the subgroup of $A(F)$ generated by $\Gamma$ and $Q$. Then the rank of $\Gamma' \cap A'(F)$ is $\leq 1+r$. 

    If $x+B \subseteq X$ for an abelian subvariety $B$ of $A$, then $B = (x+B)-(x+B) \subseteq X-X \subseteq A'$, so that $B$ is an abelian subvariety of $A'$. It follows that $\Sigma(X;A)=\Sigma(X-Q;A')$. This proves (i) and (ii) in the general case. Fix $B \in \Sigma(X;A)$ and suppose $x+B \subseteq X$ meets $\Gamma$. Then $(x-Q)+B \subseteq X-Q$ meets $\Gamma' \cap A'(F)$. Thus the number of cosets $x+B \subseteq X$ which meet $\Gamma$ is at most $c_6(g,d)^{2+r}$. This proves (iii) in the general case.
\end{proof}

The second geometric lemma is the boundedness theorem for abelian subvarieties of bounded degree.  We use R\'{e}mond's result \cite[Proposition~4.1]{Rem00} through the following consequence, which allows us later to replace a priori infinite families of abelian subvarieties by a finite set depending only on geometric data.

\begin{lemma}\label{Remond_lemma}
    Let $A/F$ be a polarized abelian variety of dimension $g$ and degree $D$. Then the number of abelian subvarieties $B$ of $A$ satisfying $\deg B \leq d$ is at most $c(g,d,D)>0$.
\end{lemma}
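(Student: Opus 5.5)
We will deduce the lemma from R\'{e}mond's theorem \cite[Proposition~4.1]{Rem00}, passing through the finite-type Chow variety of cycles of bounded degree. Fix the polarization $\mathcal{L}$ with $\deg_\mathcal{L} A = D$. Replacing $\mathcal{L}$ by $\mathcal{L}^{\otimes 3}$ multiplies degrees of $k$-dimensional subvarieties by $3^k$, so we may assume $\mathcal{L}$ is very ample. Then $A \hookrightarrow \bbp^N$ is a projectively normal embedding with $N$ bounded in terms of $g$ and $D$, for example $N+1 = h^0(\mathcal{L}) = D/g!$. An abelian subvariety $B$ of dimension $k$ with $\deg B \leq d$ is then a subvariety of $\bbp^N$ of degree at most $3^k d$. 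All our bounds may therefore be stated in terms of $(g,d,D)$.

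The key step is a rigidity statement: abelian subvarieties of $A$ come in no continuous families. Suppose $B_s$ is an algebraic family of abelian subvarieties parametrized by a connected base $S$. Then the torsion subgroups $B_s[m]$ form a finite subgroup scheme inside the finite scheme $A[m]$, so they are constant in $s$. Since $\bigcup_m B_s[m]$ is Zariski dense in $B_s$, the subvariety $B_s$ itself is constant.

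Concretely, the abelian subvarieties of dimension $k$ and degree $\leq d$ give points of the Chow variety $\mathrm{Chow}_{k,\leq 3^k d}(\bbp^N)$. Among these, the locus of points whose cycle is contained in $A$, is irreducible, is stable under the group law, and contains $0$ is a constructible subset. By the rigidity above, every irreducible component of this locus is a point. Its number of components is bounded by a quantity depending only on $N$, $k$, $3^k d$, and the degree of the defining equations of $A$ and of the addition map, which are themselves bounded in terms of $g$ and $D$. This bounds the number of components uniformly. Summing over $k \leq g$ gives $c(g,d,D)$.

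The main obstacle is making this count uniform as $A$ varies, rather than merely finite for each fixed $A$. This uniformity is precisely what R\'{e}mond's \cite[Proposition~4.1]{Rem00} supplies in effective form. Rather than reprove it, I would quote it directly: it gives an explicit bound, polynomial in $d$ with exponents depending on $g$ and $D$, for the number of abelian subvarieties of degree at most $d$. This bound is uniform in $A$. The Chow-variety argument above then serves only as the conceptual justification. The remaining check is that R\'{e}mond's degree normalization matches ours up to the factor $3^{k}$ introduced by the twist to a very ample bundle, which is harmless.
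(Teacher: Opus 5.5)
Your proposal ends up resting on the same input as the paper, namely quoting R\'{e}mond's Proposition~4.1. However, it omits the one step the paper's proof actually carries out.

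The paper uses R\'{e}mond's bound as a statement about complex abelian varieties. The lemma, by contrast, is stated over an arbitrary algebraically closed field $F$ of characteristic $0$. Such an $F$ need not embed in $\mathbb{C}$, for instance if its cardinality exceeds the continuum. So your sentence ``this bound is uniform in $A$'' does not by itself cover the lemma.

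The missing step is a Lefschetz-principle reduction. Suppose $B_1,\ldots,B_N$ are distinct abelian subvarieties of degree $\leq d$ with $N>c(g,d,D)$. Choose a finitely generated field $F_0\subseteq F$ over which $A$, $\mathcal{L}$ and all the $B_i$ are defined. Embed $F_0\hookrightarrow\mathbb{C}$ and base change. The following are all preserved:
\begin{enumerate}[(i)]
\item being an abelian subvariety;
\item distinctness, since field extension is faithfully flat;
\item the dimension $g$;
\item all intersection degrees, in particular $\deg A=D$ and $\deg B_i\leq d$.
\end{enumerate}
This contradicts R\'{e}mond's bound over $\mathbb{C}$. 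Working with finitely many $B_i$ at a time is exactly what lets the argument apply even when $F$ itself is too large to embed in $\mathbb{C}$.

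The rest of your write-up is not needed, and the Chow-variety paragraph should not be offered as justification. You assert that the number of components is bounded in terms of the degrees of the equations of $A$ and of the addition map, but you give no argument for this. Indeed, the group law is not given by a single global system of polynomials on $A\subseteq\mathbb{P}^N$. You then concede that uniformity is exactly what is missing.

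To make that route rigorous, you would have to work relative to a finite-type parameter scheme of polarized abelian varieties over $\mathbb{Q}$. There, your rigidity observation makes the relevant locus quasi-finite over the base, which bounds the fibres over every algebraically closed field of characteristic $0$ simultaneously. That would be a genuinely different, non-effective argument that also sidesteps the base-field issue, but it is not what you wrote.

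The twist to $\mathcal{L}^{\otimes 3}$ is harmless, and it is relevant only if R\'{e}mond's normalization requires a very ample bundle. Note that after twisting, $\deg A$ becomes $3^gD$, so $h^0(\mathcal{L}^{\otimes 3})=3^gD/g!$ rather than $D/g!$.
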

\begin{proof}
    If $F=\bbc$, then the lemma follows from \cite[Proposition~4.1]{Rem00} with the explicit constant $c(g,d,D)$. Let $F$ be an arbitrary algebraically closed field of characteristic 0. Assume there exist $N$ abelian subvarieties $B_1,\ldots,B_N$ of $A$ satisfying $\deg B \leq d$, where $N > c(g,d,D)$. Since $A,\mathcal{L},B_1,\ldots,B_N$ are defined by finitely many algebraic equations, there exists a finitely generated subfield $F_0 \subseteq F$ such that $A,\mathcal{L},B_1,\ldots,B_N$ are all defined over $F_0$. Choose an embedding $F_0 \hookrightarrow \bbc$. Since base change from $F_0$ to $\bbc$ preserves degrees, we get a contradiction.
\end{proof}

\section{Setup for Theorem~\ref{main_theorem}}\label{Setup}

We now set up the geometric framework for the proof of Theorem~1.1. After fixing an affine chart, the additive equation $f(P)+f(Q) = u$ in affine space gives rise to a family of closed subschemes $Y_u$. The additive energy of a finite subset $X$ of $f(\Gamma) \cap \bba^n$ will be estimated by counting the points of $\Gamma^2$ lying on $Y_u(F)$. 

Let $A/F$ be an abelian variety of dimension $g$ over an algebraically closed field $F$ of characteristic 0. Let $f:A \rightarrow \bbp^n$ be a morphism which is finite of degree $d$ onto its image $Z$, and let $t$ denote the projective degree of $Z$ in $\bbp^n$. Let $\Gamma$ be a subgroup of $A(F)$ of finite rank $r$. 

Fix an affine chart $\bba^n \subseteq \bbp^n$ and set $U=Z \cap \bba^n$, $V = f^{-1}(\bba^n) \subseteq A$. Define a morphism 
\[\Phi : V \times V \longrightarrow \bba^n,\quad (P,Q) \longmapsto f(P)+f(Q).\]

For each $u \in \bba^n$, define 
\[Y_u := \overline{\Phi^{-1}(u)} \subseteq A^2,\]
where the closure is the Zariski closure in $A^2$. Then $Y_u$ is a closed subscheme of $A^2$. Let $Y_u^\circ$ be the union of $Y^\circ$ and let $\Omega_u$ be the union of $\Sigma(Y;A^2)$ where $Y$ runs through irreducible components of $Y_u$. 

The polarization of $A^2$ is induced by $f$:
\[\mathcal{L} := (p_1^A)^*\mathcal{L}_0 \otimes (p_2^A)^*\mathcal{L}_0\]
where $p_1^A,p_2^A:A^2 \rightarrow A$ are projections and 
\[\mathcal{L}_0 := f^* \mathcal{O}_{\bbp^n}(1).\]

By the proof of \cite[Proposition~4.1]{Cho26}, $Y_u$ has at most $d^2t^2$ number of irreducible components, and for each irreducible component $Y$ of $Y_u$, the degree of $Y$ with respect to $\mathcal{L}$ is bounded by $2^gd^2t^2$. Now applying Lemma~\ref{Uniform_Mordell_Lang_conjecture} to each irreducible component $Y$ of $Y_u$, we obtain the following proposition.

\begin{proposition}\label{Bounding_proposition}
    There exist constants $c_i(g,d,t)>0$ with the following property.
    \begin{enumerate}[(i)]
        \item $\lvert \Omega_u \rvert \leq c_1(g,d,t)$.
        \item For each $B \in \Omega_u$, $\deg_\mathcal{L} B \leq c_2(g,d,t)$.
        \item For each $B \in \Omega_u$, the number of cosets $z+B \subseteq Y_u$ which meet $\Gamma^2$ is at most $c_3(g,d,t)^{1+r}$.
        \item $\lvert Y_u^\circ(F) \cap \Gamma^2 \rvert \leq c_4(g,d,t)^{1+r}$.
    \end{enumerate}
\end{proposition}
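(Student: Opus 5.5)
We apply Lemma~\ref{Uniform_Mordell_Lang_conjecture} to each irreducible component of $Y_u$ and sum over components, using that the number of components and their degrees are bounded uniformly in $u$. Fix $u \in \bba^n$. By the proof of \cite[Proposition~4.1]{Cho26}, $Y_u$ has at most $d^2t^2$ irreducible components $Y_1,\ldots,Y_m$, each of degree at most $2^gd^2t^2$ with respect to $\mathcal{L}$. The ambient variety is $A^2$, of dimension $2g$, polarized by $\mathcal{L}$. The group is $\Gamma^2 \subseteq A^2(F)$, of rank $2r$.

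For each component $Y_j$, Lemma~\ref{Uniform_Mordell_Lang_conjecture} (with $A^2$, $2g$, $2^gd^2t^2$, $\Gamma^2$, $2r$) gives four bounds:
\begin{enumerate}[(i)]
\item $\lvert \Sigma(Y_j;A^2)\rvert \le c_1(2g,2^gd^2t^2)$;
\item each $B\in\Sigma(Y_j;A^2)$ has degree at most $c_2(2g,2^gd^2t^2)$;
\item for each such $B$, the number of cosets $z+B\subseteq Y_j$ meeting $\Gamma^2$ is at most $c_3(2g,2^gd^2t^2)^{1+2r}$;
\item $\lvert Y_j^\circ(F)\cap\Gamma^2\rvert\le c_4(2g,2^gd^2t^2)^{1+2r}$.
\end{enumerate}
The rank exponent $1+2r$ is at most $2(1+r)$, so $c^{1+2r}\le (c^2)^{1+r}$ whenever $c \geq 1$. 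We may enlarge all constants to be at least $1$, so every bound takes the form $C^{1+r}$ with $C$ depending only on $(g,d,t)$.

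Now sum over components. Since $\Omega_u=\bigcup_j\Sigma(Y_j;A^2)$, part (i) follows with $c_1(g,d,t)=d^2t^2\,c_1(2g,2^gd^2t^2)$. Part (ii) holds with the same degree bound $c_2(2g,2^gd^2t^2)$. Part (iv) follows since $Y_u^\circ=\bigcup_j Y_j^\circ$, giving at most $d^2t^2\,c_4^{1+2r}$ points, which we absorb into a single $c_4(g,d,t)^{1+r}$.

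Part (iii) is the one place where care is needed. A coset $z+B\subseteq Y_u$ is irreducible, so it lies in some component $Y_j$. However, $B$ need not belong to $\Sigma(Y_j;A^2)$; it may come from a different component's list. If $B$ is not maximal in $Y_j$, then $z+B$ lies in a maximal coset $z'+B'\subseteq Y_j$ with $B'\in\Sigma(Y_j;A^2)$, and $B'$ contains $B$. Every point of $z+B$ lies in such a maximal coset, so if $z+B$ meets $\Gamma^2$, then $z'+B'$ also meets $\Gamma^2$.

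The $B$-cosets inside a fixed $z'+B'$ are not finitely many. So rather than mapping $z+B$ to the coset containing it, we count only the cosets $z+B$ that are themselves maximal in some component. That is, we interpret (iii) as counting the cosets $z+B\subseteq Y_j$ with $B\in\Sigma(Y_j;A^2)$, summed over those $j$ for which $B\in\Sigma(Y_j;A^2)$. This reading matches the decomposition used later, where points of $Y_u\cap\Gamma^2$ are covered by $Y_u^\circ$ together with maximal cosets. Under it, (iii) follows by summing part (iii) of Lemma~\ref{Uniform_Mordell_Lang_conjecture} over at most $d^2t^2$ components, giving $c_3(g,d,t)=d^2t^2\,c_3(2g,2^gd^2t^2)^2$.

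I expect the main subtlety to be this bookkeeping of maximality across components. The literal statement of (iii), counting all cosets of $B$ in $Y_u$, is true only if $B$ is maximal in each component containing such a coset. If that fails, I would restate (iii) explicitly in terms of maximal cosets.
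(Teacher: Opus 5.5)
Your proof is correct and follows the paper's argument: apply Lemma~\ref{Uniform_Mordell_Lang_conjecture} on $A^2$ to each of the at most $d^2t^2$ components of $Y_u$, each of $\mathcal{L}$-degree at most $2^gd^2t^2$, and sum over components; the paper leaves implicit your step absorbing the rank $2r$ of $\Gamma^2$ into the exponent $1+r$. Your caveat on (iii) is legitimate, since the paper's one-line proof also only bounds the cosets of $B$ lying in components $Y$ with $B\in\Sigma(Y;A^2)$, and this is exactly the form used in the decomposition~\eqref{decomposition_eq}.
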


Define $\Omega$ to be the union of $\Omega_u$ over all $u \in \bba^n$. We prove that $\Omega$ is finite. 

\begin{proposition}\label{Omega_finite}
    The set $\Omega$ is finite and 
    \[\lvert \Omega \rvert \leq c_5(g,d,t).\]
\end{proposition}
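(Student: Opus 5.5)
The plan is to combine Proposition~\ref{Bounding_proposition}(ii) with Lemma~\ref{Remond_lemma} applied to the polarized abelian variety $(A^2,\mathcal{L})$. Every $B \in \Omega$ lies in $\Omega_u$ for some $u \in \bba^n$. By Proposition~\ref{Bounding_proposition}(ii), its degree satisfies $\deg_\mathcal{L} B \leq c_2(g,d,t)$, and the constant $c_2$ does not depend on $u$. So $\Omega$ sits inside the set of abelian subvarieties of $A^2$ of $\mathcal{L}$-degree at most $c_2(g,d,t)$. Lemma~\ref{Remond_lemma} bounds the size of this set once we know the dimension and degree of the polarized variety $(A^2,\mathcal{L})$. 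The dimension is $2g$, so the remaining task is to bound the degree of $\mathcal{L}$ on $A^2$ in terms of $g,d,t$.

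To bound that degree, note first that $\mathcal{L}_0 = f^*\mathcal{O}_{\bbp^n}(1)$ is ample, because $f$ is finite onto its image. By the projection formula, $\deg_{\mathcal{L}_0} A = (\mathcal{L}_0^g) = \deg(f)\cdot \deg Z = dt$. For $\mathcal{L} = (p_1^A)^*\mathcal{L}_0 \otimes (p_2^A)^*\mathcal{L}_0$, expand the top self-intersection binomially. Only the term with $g$ factors from each side survives, which gives
\[
(\mathcal{L}^{2g}) = \binom{2g}{g}(\mathcal{L}_0^g)^2 = \binom{2g}{g} d^2t^2.
\]
Thus $(A^2,\mathcal{L})$ has dimension $2g$ and degree $D = \binom{2g}{g}d^2t^2$. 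Lemma~\ref{Remond_lemma} then gives $\lvert \Omega \rvert \leq c(2g, c_2(g,d,t), D) =: c_5(g,d,t)$.

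I expect no serious obstacle; the content is the uniformity in $u$. The one point to check is the normalization of degree: the degree used in Proposition~\ref{Bounding_proposition}(ii) (coming from Lemma~\ref{Uniform_Mordell_Lang_conjecture}) must be the same intersection-theoretic degree $(\mathcal{L}^{\dim B}\cdot B)$ that appears in Lemma~\ref{Remond_lemma} and in R\'{e}mond's proposition. Both are taken with respect to the same polarization $\mathcal{L}$, so this should be consistent. One may also need $\mathcal{L}$ to be a genuine polarization (ample), which holds because it is an external tensor product of ample bundles.
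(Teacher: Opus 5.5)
Your proposal is correct and is essentially the paper's proof: the uniform bound of Proposition~\ref{Bounding_proposition}~(ii) places $\Omega$ inside the set of abelian subvarieties of $A^2$ of $\mathcal{L}$-degree at most $c_2(g,d,t)$, and Lemma~\ref{Remond_lemma} applied to $(A^2,\mathcal{L})$, which has dimension $2g$ and degree $\binom{2g}{g}d^2t^2$, bounds the size of that set. Your projection-formula and binomial-expansion computation of $\deg_{\mathcal{L}}(A^2)$ simply supplies the derivation of the degree value that the paper states without proof.
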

\begin{proof}
    We first note that 
    \[\deg_\mathcal{L}(A^2) = \binom{2g}{g}d^2t^2.\]
    By Proposition~\ref{Bounding_proposition}~(ii), for every $B \in \Omega$, $\deg_\mathcal{L} B \leq c_2(g,d,t)$. Applying Lemma~\ref{Remond_lemma} proves the proposition.
\end{proof}

To prove Theorem~\ref{main_theorem}, we must analyze the set $(Y_u(F) \cap \Gamma^2) \cap V^2$. First, define $\Omega_u'$ to be the subset of $\Omega_u$ consisting of $B \in \Omega_u$ such that there exists a coset $z+B \subseteq Y_u$ meeting $V^2$. Then we have the following decomposition of $(Y_u(F) \cap \Gamma^2) \cap V^2$:
\begin{equation}\label{decomposition_eq}
    (Y_u(F) \cap \Gamma^2) \cap V^2 \subseteq (Y_u^\circ(F) \cap \Gamma^2) \cup \bigcup_{B \in \Omega_u'} \bigcup_{j=1}^{n_{u,B}} ((z_{u,B,j} + B)(F) \cap \Gamma^2),
\end{equation}
where for each $B \in \Omega_u'$, $n_{u,B} \leq c_3(g,d,t)^{1+r}$.

We further decompose $\Omega_u'$ into two parts. For cosets $z+B \subseteq Y_u$ which meet $V^2$, we have the following lemma.

\begin{lemma}\label{isogeny_lemma}
    Suppose a coset $z+B \subseteq Y_u$ which meets $V^2$ is given. Let $p_1^A,p_2^A:A^2 \rightarrow A$ be projections and let $B_1=p_1^A(B)$, $B_2 = p_2^A(B)$. Then $\left.p_1^A\right|_B:B \rightarrow B_1$, $\left.p_2^A\right|_B:B \rightarrow B_2$ are isogenies and 
    \[\dim B = \dim B_1 = \dim B_2 \leq g.\]
\end{lemma}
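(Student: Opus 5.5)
We need to show that the restrictions of the projections to $B$ have finite kernel. Since $p_1^A|_B : B \to B_1$ is a surjective homomorphism of abelian varieties, it is an isogeny exactly when its kernel $B \cap (\{0\} \times A)$ is finite. The plan is to show that the identity component $K$ of $B \cap (\{0\}\times A)$ is trivial, and symmetrically for the second projection. This gives $\dim B = \dim B_1 = \dim B_2$, and the bound $\le g$ follows because $B_1 \subseteq A$.

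Write $z=(z_1,z_2)$ and $K = \{0\}\times K'$, with $K'$ an abelian subvariety of $A$. Since $K \subseteq B$, we have $z+B+K = z+B \subseteq Y_u$. By hypothesis $z+B$ meets $V^2$; choose $z+b = (P_0,Q_0) \in V^2$ in that intersection. The set $W := (z+B)\cap V^2$ is then a nonempty open subset of the irreducible coset $z+B$, hence dense in it. This density step needs care: $Y_u$ is defined as the closure of $\Phi^{-1}(u)$, so points of $Y_u$ need not satisfy $\Phi = u$ a priori. On the open set $V^2$, however, the closed condition $\Phi = u$ passes to the closure: $\Phi^{-1}(u)$ is closed in $V^2$, so $Y_u \cap V^2 = \Phi^{-1}(u)$. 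Hence every $(P,Q) \in W$ satisfies $f(P)+f(Q)=u$.

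Now fix $P = P_0$. The fiber $\{P_0\}\times (Q_0+K')$ lies in $z+B$, and its intersection with $V^2$ is $\{P_0\}\times\bigl((Q_0+K')\cap V\bigr)$. This is open in $Q_0+K'$ and nonempty, since it contains $Q_0$. On it we get $f(Q) = u - f(P_0)$, which is constant. So the nonempty open subset $(Q_0+K')\cap V$ of the irreducible variety $Q_0+K'$ is contained in the fiber $f^{-1}(u-f(P_0))$. Because $f$ is finite onto its image, this fiber is finite, so $\dim K' = 0$. Thus $K$ is trivial and $p_1^A|_B$ has finite kernel. The same argument with the roles of $P$ and $Q$ swapped handles $p_2^A|_B$.

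The main obstacle is the bookkeeping around the closure in the definition of $Y_u$, that is, ensuring that the relation $f(P)+f(Q)=u$ really holds on a nonempty open part of each relevant fiber coset. The identity $Y_u\cap V^2=\Phi^{-1}(u)$ together with the openness of $V$ resolves it. The rest is the elementary fact that a finite morphism cannot be constant on a positive-dimensional irreducible variety.
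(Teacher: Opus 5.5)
Your proof is correct and follows the paper's argument. A positive-dimensional abelian subvariety $\{0\}\times C$ in the kernel would force $f$ to be constant on a nonempty open subset of a translate of $C$, contradicting the finiteness of $f$; your explicit check that $Y_u\cap V^2=\Phi^{-1}(u)$ fills in a step the paper uses tacitly.
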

\begin{proof}
    Suppose $\left.p_1^A\right|_B$ has positive-dimensional kernel. Then there exists a positive-dimensional abelian subvariety $C$ of $A$ such that $0 \times C \subseteq B$. Take a point $(a,b) \in (z+B) \cap V^2$. Since $0 \times C \subseteq B$, $(a,b+x) \in (z+B) \cap V^2 \subseteq Y_u \cap V^2$ for every $x \in C \cap (V-b)$. Therefore, 
    \[f(a)+f(b+x) \equiv u\]
    for every $x \in C \cap (V-b)$. This shows that $f$ is constant on $(b+C) \cap V$. Since $V$ is Zariski open in $A$, $f$ is constant on $b+C$. Since $C$ is positive-dimensional, this contradicts that $f$ is finite. Hence, $\left.p_1^A\right|_B:B \rightarrow B_1$ is an isogeny. By symmetry, $\left.p_2^A\right|_B:B \rightarrow B_2$ is an isogeny. 
\end{proof}

We conclude that for a coset $z+B \subseteq Y_u$ which meets $V^2$, we have either $\dim B = g$ or $\dim B < g$. Let $\Omega_u^g$ be the set of $B \in \Omega_u'$ satisfying $\dim B=g$, and let $\Omega_u^{<g}$ be the set of $B \in \Omega_u'$ satisfying $\dim B<g$. 

Define $\Sigma$ to be the set of $u \in \bba^n$ such that $\Omega_u^g$ is nonempty. We prove that $\Sigma$ is finite.

\begin{proposition}\label{Sigma_finite}
    The set $\Sigma$ is finite and 
    \[\lvert \Sigma \rvert \leq c_6(g,d,t).\]
\end{proposition}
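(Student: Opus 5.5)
Fix $u\in\Sigma$ and $B\in\Omega_u^g$ together with a coset $z+B\subseteq Y_u$ meeting $V^2$. By Lemma~\ref{isogeny_lemma}, both projections $B\to B_1=A$ and $B\to B_2=A$ are isogenies. Hence $B$ is the image of $A$ under a map of the form $P\mapsto(\alpha P,\beta P)$, where $\alpha,\beta\in\mathrm{End}(A)$ are isogenies; concretely one can take $\alpha=e\cdot(p_1^A|_B)^{-1}$-style data after composing with an isogeny $A\to B$. Writing $z=(a,b)$, the inclusion $z+B\subseteq Y_u$ becomes the functional equation
\[f(a+\alpha P)+f(b+\beta P)\equiv u\]
on the nonempty open set of $P$ with $a+\alpha P,\ b+\beta P\in V$. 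This is exactly the situation of \cite[Proposition~5.1]{Cho26}, and I would rerun that argument with $B$ allowed to be arbitrary. The key point is that $\Omega$ is finite with $\lvert\Omega\rvert\le c_5(g,d,t)$ by Proposition~\ref{Omega_finite}. It therefore suffices to show that, for each fixed $g$-dimensional $B\in\Omega$, at most $c(g,d,t)$ values of $u$ admit a coset $z+B\subseteq Y_u$ meeting $V^2$.

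Fix $B$ and consider the incidence set
\[W_B=\{z\in A^2:\ (z+B)\cap V^2\neq\emptyset,\ \Phi \text{ is constant on }(z+B)\cap V^2\}.\]
On $W_B$ the constant value $u(z)$ is well defined. It is a regular function of $z$, since it can be computed by evaluating $\Phi$ at a point of $(z+B)\cap V^2$, and these local formulas glue. Moreover, $W_B$ is invariant under translation by $B$. I would show that $u$ is locally constant on $W_B$ by differentiating along directions transverse to $B$. Since $\dim B=g$, the quotient $A^2/B$ is a $g$-dimensional abelian variety, and $W_B/B$ is a constructible subset of it.

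The core step follows the mechanism of \cite[Proposition~5.1]{Cho26}. Suppose $u$ varied along a positive-dimensional family of $z$. Moving $a$ along a curve while keeping the identity, and then differentiating in $z$, would give $\sum_i \partial f/\partial(\cdot)\equiv$ const. Equivalently, $f(a+\alpha P)-f(a'+\alpha P)$ would be constant in $P$ for nearby $a,a'$. Since $f$ is finite, a nonconstant rational function on $A$ that is translation-additive in this way forces a contradiction. The relevant input is that any morphism from $A$ to $\bba^n$ off a divisor that is additive under a translation is constant, because $A$ is proper and the poles must cancel. I expect this step, adapting the local constancy argument of \cite{Cho26} to non-simple $A$, to be the main obstacle. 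It is the only point where simplicity was used, and I would check that it is replaced entirely by the isogeny statement of Lemma~\ref{isogeny_lemma}.

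Once local constancy holds, $u$ takes at most as many values as $W_B/B$ has irreducible components. This number is bounded in terms of $g,d,t$, because $W_B$ is cut out by equations whose degrees are controlled by $\deg_\mathcal{L}B\le c_2(g,d,t)$ and by $d,t$. Summing over the at most $c_5(g,d,t)$ choices of $B$ then gives $\lvert\Sigma\rvert\le c_6(g,d,t)$.
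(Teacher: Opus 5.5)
\textbf{The gap is in the per-$B$ bound.} Your first reduction is sound, and it is a different route from the paper. The paper only rewrites the coset condition as $f(a+\alpha P)+f(b+\beta P)\equiv u$ and defers to \cite[Proposition~5.1]{Cho26}. You instead use Proposition~\ref{Omega_finite} to reduce to bounding, for each fixed $g$-dimensional $B\in\Omega$, the number of $u$ admitting a coset $z+B\subseteq Y_u$ that meets $V^2$. That per-$B$ bound is the whole content, and you do not establish it.

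The tool you name is false as stated. You claim a morphism $A\setminus D\to\mathbb{A}^n$ that is additive under a translation must be constant. Counterexample: if $c$ is a torsion point and $f$ factors through $A\to A/\langle c\rangle$, then $f\circ t_c-f\equiv 0$ while $f$ is nonconstant. Your ``equivalently'' step also silently takes $b=b'$. Read charitably, the idea is that a positive-dimensional family of translations cannot preserve the polar divisor. That only shows $W_B/B$ is finite; it does not bound its size. For the size you give only an unsupported component count, ``cut out by equations of controlled degree.'' But $W_B$ is defined by a universally quantified condition, and you carry out no effective elimination in a projective model of $A^2$.

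\textbf{What is true, and closes the gap: each such $B$ contributes at most one $u$.}

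Take $z=(a,b)$ and $z'=(a',b')$ with $z+B\subseteq Y_u$ and $z'+B\subseteq Y_{u'}$, both meeting $V^2$. Since $p_2^A(B)=A$, you may translate $z'$ by an element of $B$ so that $b'=b$. On the dense open set of $w=(w_1,w_2)\in B$ with $z+w,\,z'+w\in V^2$, you have $f(a+w_1)+f(b+w_2)=u$ and $f(a'+w_1)+f(b+w_2)=u'$. Subtracting and using $p_1^A(B)=A$ gives $f\circ t_c=f+\lambda$ as rational maps from $A$ to $\mathbb{A}^n$, where $c=a-a'$ and $\lambda=u-u'$.

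The polar locus of the affine coordinates of $f$ is exactly $A\setminus V$, since at each of its points some homogeneous coordinate of $f$ is nonzero. This set is the support of a divisor in $\lvert\mathcal{L}_0\rvert$, which is ample because $f$ is finite. Hence $t_c$ preserves this support. Its stabilizer is a closed subgroup whose identity component fixes every irreducible component. That identity component therefore lies in the finite group $K(\mathcal{L}_0)$, so it is trivial and the stabilizer is finite.

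So $mc=0$ for some $m\geq 1$, and $f=f\circ t_{mc}=f+m\lambda$ forces $\lambda=0$ in characteristic $0$. Choosing $B_u\in\Omega_u^g$ for each $u\in\Sigma$, the map $u\mapsto B_u$ is injective. This gives $\lvert\Sigma\rvert\leq\lvert\Omega\rvert\leq c_5(g,d,t)$ directly, with no need for local constancy or component counting.
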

\begin{proof}
    Let $u \in \Sigma$. Then there exists an abelian variety $B \in \Omega_u^g$ and a coset $z+B \subseteq Y_u$ which meets $V^2$. Let $p_1^A,p_2^A:A^2 \rightarrow A$ be projections and let $B_1=p_1^A(B)$, $B_2 = p_2^A(B)$. By Lemma~\ref{isogeny_lemma}, we have 
    \[\dim B = \dim B_1 = \dim B_2 = g,\]
    and hence $B_1=B_2=A$. We will write $z=(a,b)$, $\alpha = \left.p_1^A\right|_B$, and $\beta = \left.p_2^A\right|_B$. Since a coset $z+B \subseteq Y_u$ meets $V^2$, we obtain the following functional equation:
    \[f(a+\alpha P) + f(b+\beta P) \equiv u.\]
    Now exactly the same proof as in \cite[Proposition~5.1]{Cho26} ends the proof.
\end{proof}

We now pass from the geometry of the fibers $Y_u$ to the combinatorial quantities which appear in the energy calculation. Suppose a finite subset $X \subseteq f(\Gamma) \cap \bba^n$ is given. Let $\mathcal{X} := f^{-1}(X) \cap \Gamma$ be the preimage of $X$ in $\Gamma$. 

For each $u \in \bba^n$, define 
\[N_u := \lvert \{(a,b) \in X^2\:|\:a+b=u\} \rvert\]
and 
\[\mathcal{N}_u := \lvert \{(P,Q) \in \mathcal{X}^2\:|\:f(P)+f(Q)=u\} \rvert.\]
Then we have 
\[N_u \leq \mathcal{N}_u \leq d^2t^2N_u.\]
We also note that the set 
\[\{(P,Q) \in \mathcal{X}^2\:|\:f(P)+f(Q)=u\}.\]
is contained in 
\[(Y_u(F) \cap \Gamma^2) \cap V^2.\]
From the decomposition~\eqref{decomposition_eq} we define 
\[\mathcal{N}_u^\circ := \lvert (Y_u^\circ(F) \cap \Gamma^2) \cap \mathcal{X}^2 \rvert\]
and 
\[\mathcal{N}_u^\bullet := \left\lvert \bigcup_{B \in \Omega_u'} \bigcup_{j=1}^{n_{u,B}} ((z_{u,B,j} + B)(F) \cap \Gamma^2) \cap \mathcal{X}^2 \right\rvert.\]
We further define 
\[\mathcal{N}_{u,B,j} := \lvert ((z_{u,B,j} + B)(F) \cap \Gamma^2) \cap \mathcal{X}^2 \rvert,\quad B \in \Omega_u',1 \leq j \leq n_{u,B}.\]
Then we have 
\[\mathcal{N}_u \leq \mathcal{N}_u^\circ + \mathcal{N}_u^\bullet\]
and 
\[\mathcal{N}_u^\bullet \leq \sum_{B \in \Omega_u'} \sum_{j=1}^{n_{u,B}} \mathcal{N}_{u,B,j}.\]

\section{Induction proposition}\label{Induction}

The purpose of this section is to estimate the contribution of the lower-dimensional cosets in the decomposition of Section~\ref{Setup}. Such cosets project onto proper abelian subvarieties of $A$, so after restricting to the corresponding cosets and translating the base point, the problem reduces to Theorem~1.1 in lower dimension. 

Define $\Omega^{<g}$ to be the union of $\Omega_u^{<g}$ over all $u \in \bba^n$. Fix $B \in \Omega^{<g}$. Let $p_1^A,p_2^A:A^2 \rightarrow A$ be projections and let $B_1=p_1^A(B)$, $B_2 = p_2^A(B)$. By Lemma~\ref{isogeny_lemma}, we have 
\[\dim B = \dim B_1 = \dim B_2 < g,\]
so $B_1,B_2$ are proper abelian subvarieties of $A$. 

\begin{proposition}\label{induction_proposition}
    Assume Theorem~\ref{main_theorem} is true for all dimensions $<g$. Fix $B \in \Omega^{<g}$. Then 
    \begin{equation}\label{induction_sum_estimate}
        \sum_u \sum_{j=1}^{n_{u,B}} \mathcal{N}_{u,B,j}^2 \leq c_7(g,d,t)^{1+r} \lvert X \rvert^2,
    \end{equation}
    where we define $n_{u,B}=0$ if $B \notin \Omega_u^{<g}$.
\end{proposition}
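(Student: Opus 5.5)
The plan is to interpret the left-hand side of \eqref{induction_sum_estimate} as a count of additive quadruples, and then bound that count by the energies of sets living on cosets of the proper abelian subvarieties $B_1,B_2$, to which the induction hypothesis applies.

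\textbf{Step 1: reduce to counting quadruples.} Fix $B\in\Omega^{<g}$ with $\dim B=k<g$. Every pair $(P,Q)\in A^2$ lies in exactly one coset of $B$. If that coset is some $z_{u,B,j}+B\subseteq Y_u$ and $(P,Q)\in V^2$, then $u=f(P)+f(Q)$ is determined by the coset. Hence distinct pairs $(u,j)$ correspond to disjoint cosets. The quantity $\mathcal{N}_{u,B,j}^2$ counts quadruples $(P,Q,P',Q')\in\mathcal{X}^4$ with $(P,Q),(P',Q')$ in the same coset $z_{u,B,j}+B$. So the whole sum in \eqref{induction_sum_estimate} is at most the number $M$ of quadruples $(P,Q,P',Q')\in\mathcal{X}^4$ such that
\[(P,Q)-(P',Q')\in B\quad\text{and}\quad f(P)+f(Q)=f(P')+f(Q').\]
In particular, every such quadruple satisfies $P-P'\in B_1$ and $Q-Q'\in B_2$. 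I will drop the finer condition that the difference lies in $B$ itself and only keep these weaker ones.

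\textbf{Step 2: organize by cosets.} Partition $\mathcal{X}$ into the sets $\mathcal{X}\cap(c+B_1)$ over cosets $c+B_1$, and also into the sets $\mathcal{X}\cap(c'+B_2)$ over cosets $c'+B_2$. Since $\mathcal{X}\subseteq\Gamma$, the representatives $c,c'$ can be chosen in $\Gamma$. For such $c$, set $X_1^c:=f(\mathcal{X}\cap(c+B_1))\subseteq\bba^n$, and define $X_2^{c'}$ similarly. Each fibre of $f$ has at most $d$ points, so passing from quadruples of points to quadruples of values costs at most a factor $d^4$. This gives
\[M\le d^4\sum_{c,c'}E(X_1^c,X_2^{c'}),\qquad E(X_1,X_2):=\lvert\{(x,y,x',y')\in X_1\times X_2\times X_1\times X_2: x+y=x'+y'\}\rvert.\]
By Cauchy--Schwarz applied to representation numbers, $E(X_1,X_2)\le E(X_1)^{1/2}E(X_2)^{1/2}$.

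\textbf{Step 3: apply the induction hypothesis.} For $c\in\Gamma$, consider the morphism
\[f_c:B_1\to\bbp^n,\qquad x\mapsto f(c+x).\]
It is finite onto its image, of degree $d_c\le d$. The image $f(c+B_1)$ has projective degree $t_c\le\deg_{\mathcal{L}_0}B_1$. This is bounded by a constant depending on $g,d,t$, because $\deg_{\mathcal{L}}B\le c_2(g,d,t)$ by Proposition~\ref{Bounding_proposition}~(ii) and $B\to B_1$ is an isogeny by Lemma~\ref{isogeny_lemma}. The group $\Gamma_c:=(\Gamma-c)\cap B_1(F)=\Gamma\cap B_1(F)$ has rank at most $r$, and $X_1^c\subseteq f_c(\Gamma_c)\cap\bba^n$. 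Theorem~\ref{main_theorem} in dimension $k<g$ therefore gives $E(X_1^c)\le C'^{1+r}\lvert X_1^c\rvert^2$. Here $C'$ is the maximum of $C(k,d',t')$ over the finitely many admissible $k<g$, $d'\le d$ and $t'$ up to the bound above, so $C'$ depends only on $(g,d,t)$. The same argument applies to $X_2^{c'}$. Combining with Step 2,
\[M\le d^4C'^{1+r}\Bigl(\sum_c\lvert X_1^c\rvert\Bigr)\Bigl(\sum_{c'}\lvert X_2^{c'}\rvert\Bigr)\le d^4C'^{1+r}\lvert\mathcal{X}\rvert^2\le d^6C'^{1+r}\lvert X\rvert^2,\]
which is the required bound with $c_7:=d^6C'$.

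\textbf{Main obstacle.} I expect the main difficulty to be the uniformity in Step 3. One has to check that the restricted data $(\dim B_1,d_c,t_c)$ range over a finite set determined by $(g,d,t)$ alone, so that the induction constants can be maximized. This requires verifying carefully that $\deg_{\mathcal{L}_0}B_1$ is controlled by $\deg_{\mathcal{L}}B$ through the isogeny $B\to B_1$.
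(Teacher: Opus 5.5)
Your argument is correct and is essentially the paper's proof: restrict to cosets of $B_1=p_1^A(B)$ and $B_2=p_2^A(B)$, and bound the sum by $\sum_{R_1,R_2}E(X_{R_1},X_{R_2})$ up to a fibre-size factor. Then apply $E(X_1,X_2)\le E(X_1)^{1/2}E(X_2)^{1/2}$ and the induction hypothesis, using $t_1\le\deg_{\mathcal{L}_0}B_1\le\deg_{\mathcal{L}}B\le c_2(g,d,t)$. Your quadruple count just makes explicit the disjointness of the cosets $z_{u,B,j}+B$, which the paper uses implicitly when passing to $\mathcal{M}_{u,R_1,R_2}$.

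One small correction: a fibre of $f$ can have more than $d$ points when $f(A)$ is not normal. Only the bound $dt=\deg_{\mathcal{L}_0}A$ is guaranteed, which is why the paper uses the factors $d^2t^2$ and $dt$. So your $d^4$ and $d^6$ should be $d^4t^4$ and $d^6t^6$; this only changes the constant $c_7$.
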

\begin{proof}
    If $R_1 = a+B_1$ and $R_2 = b + B_2$ are cosets of $B_1$ and $B_2$, respectively, then we define $\mathcal{X}_{R_1} := \mathcal{X} \cap R_1$ and $\mathcal{X}_{R_2} := \mathcal{X} \cap R_2$, and then define $X_{R_1} := f(\mathcal{X}_{R_1})$ and $X_{R_2} := f(\mathcal{X}_{R_2})$. 
    
    Recall that 
    \[\mathcal{N}_{u,B,j} := \lvert (z_{u,B,j}+B)(F) \cap \mathcal{X}^2 \rvert.\]
    If we write $z_{u,B,j} = (a_{u,B,j},b_{u,B,j})$, then 
    \[(z_{u,B,j}+B)(F) \cap V^2 \subseteq \{(P,Q) \in (a_{u,B,j}+B_1) \times (b_{u,B,j}+B_2)\:|\:f(P)+f(Q) = u\}.\]
    Thus if we write $R_1 = a_{u,B,j}+B_1$ and $R_2 = b_{u,B,j}+B_2$, then we obtain  
    \[\mathcal{N}_{u,B,j} \leq \lvert \{(P,Q) \in \mathcal{X}_{R_1} \times \mathcal{X}_{R_2}\:|\:f(P)+f(Q)=u\} \rvert.\]
    
    Therefore, the sum in \eqref{induction_sum_estimate} can be estimated by 
    \[\sum_u \sum_{j=1}^{n_{u,B}} \mathcal{N}_{u,B,j}^2 \leq \sum_{R_1 \in A/B_1} \sum_{R_2 \in A/B_2} \sum_{u} \lvert \{(P,Q) \in \mathcal{X}_{R_1} \times \mathcal{X}_{R_2}\:|\:f(P)+f(Q)=u\} \rvert^2.\]
    We now estimate the right sum over $u$.

    Define 
    \[M_{u,R_1,R_2} := \lvert \{(a,b) \in X_{R_1} \times X_{R_2}\:|\:a+b=u\} \rvert\]
    and 
    \[\mathcal{M}_{u,R_1,R_2} := \lvert \{(P,Q) \in \mathcal{X}_{R_1} \times \mathcal{X}_{R_2}\:|\:f(P)+f(Q)=u\} \rvert.\]
    Then we have 
    \[M_{u,R_1,R_2} \leq \mathcal{M}_{u,R_1,R_2} \leq d^2t^2M_{u,R_1,R_2}.\]
    In particular, we obtain 
    \[\sum_u \mathcal{M}_{u,R_1,R_2}^2 \leq d^4t^4 \sum_u M_{u,R_1,R_2}^2.\]
    The right sum is exactly $E(X_{R_1},X_{R_2})$.
    
    In summary, the sum in \eqref{induction_sum_estimate} can be estimated by 
    \begin{equation}\label{induction_energy}
        \sum_u \sum_{j=1}^{n_{u,B}} \mathcal{N}_{u,B,j}^2 \leq d^4t^4 \sum_{R_1 \in A/B_1} \sum_{R_2 \in A/B_2} E(X_{R_1},X_{R_2}).
    \end{equation}
    
    Fix a coset $R_1 \in A/B_1$. Assume $\mathcal{X}_{R_1}$ is nonempty and take $P_{R_1} \in \mathcal{X}_{R_1}$. Then $\mathcal{X}_{R_1} - P_{R_1} \subseteq \Gamma \cap B_1(F)$. Define $f_{R_1}:B_1 \rightarrow \bbp^n$ by 
    \[f_{R_1}(x) = f(P_{R_1}+x).\]
    Then $f_{R_1}$ is a morphism which is finite of degree $d_1 \leq d$ onto its image. Let $\Gamma_{B_1} = \Gamma \cap B_1(F)$, which is of finite rank $\leq r$.
    
    We have to bound the projective degree $t_1$ of $f_{R_1}(B_1)$ in $\bbp^n$. We begin with  
    \[d_1t_1 = \deg_{\mathcal{L}_0} B_1.\]
    Since $\left.p_1\right|_B:B \rightarrow B_1$ is an isogeny, 
    \[\deg_{\mathcal{L}_0} B_1 \leq \deg_{\mathcal{L}} B.\]
    However, by Proposition~\ref{Bounding_proposition}~(ii), $\deg_\mathcal{L} B \leq c_2(g,d,t)$. Therefore, $t_1 \leq c_2(g,d,t)$.
    
    Since $X_{R_1} = f_{R_1}(\mathcal{X}_{R_1} - P_{R_1})$, $X_{R_1}$ is a finite subset contained in $f_{R_1}(\Gamma_{B_1}) \cap \bba^n$. By the induction hypothesis, 
    \[E(X_{R_1}) \leq c_8(g,d,t)^{1+r}\lvert X_{R_1} \rvert^2.\]
    
    By symmetry, given a coset $R_2 = b+B_2$ of $B_2$, if $\mathcal{X}_{R_2}$ is nonempty, we have 
    \[E(X_{R_2}) \leq c_8(g,d,t)^{1+r}\lvert X_{R_2} \rvert^2.\]
    
    Therefore, given cosets $R_1 = a+B_1$ and $R_2 = b+B_2$, we have 
    \begin{equation}\label{energy_bound}
        E(X_{R_1},X_{R_2}) \leq E(X_{R_1})^{1/2}E(X_{R_2})^{1/2} \leq c_8(g,d,t)^{1+r}\lvert X_{R_1} \rvert\lvert X_{R_2} \rvert.
    \end{equation}
    
    Combining \eqref{induction_energy} and \eqref{energy_bound} gives
    \[\sum_u \sum_{j=1}^{n_{u,B}} \mathcal{N}_{u,B,j}^2 \leq c_9(g,d,t)^{1+r} \sum_{R_1 \in A/B_1} \sum_{R_2 \in A/B_2} \lvert X_{R_1} \rvert\lvert X_{R_2} \rvert.\]
    Since $\lvert X_{R_1} \rvert \leq \lvert \mathcal{X}_{R_1} \rvert$ and $\lvert X_{R_2} \rvert \leq \lvert \mathcal{X}_{R_2} \rvert$, 
    \[\sum_u \sum_{j=1}^{n_{u,B}} \mathcal{N}_{u,B,j}^2 \leq c_9(g,d,t)^{1+r} \sum_{R_1 \in A/B_1} \sum_{R_2 \in A/B_2} \lvert \mathcal{X}_{R_1} \rvert\lvert \mathcal{X}_{R_2} \rvert.\]
    However, the right sum is just $\lvert \mathcal{X} \rvert^2$. Since $\lvert \mathcal{X} \rvert \leq dt\lvert X \rvert$, this ends the proof.
\end{proof}

\section{Proof of Theorem~\ref{main_theorem}}\label{Proof}

We now combine the estimates from the previous sections and prove Theorem~\ref{main_theorem} by induction on $g$.

\begin{theorem}\label{main_theorem_re}
    Let $A/F$ be an abelian variety of dimension $g$ over an algebraically closed field $F$ of characteristic 0. Let $f:A \rightarrow \bbp^n$ be a morphism which is finite of degree $d$ onto its image, and let $t$ denote the projective degree of $f(A)$ in $\bbp^n$. Let $\Gamma$ be a subgroup of $A(F)$ of finite rank $r$. Then there exists a constant $C(g,d,t)>0$ with the following property.
    
    For every affine chart $\bba^n \subseteq \bbp^n$ and every finite subset $X \subseteq f(\Gamma) \cap \bba^n$, 
    \[E(X) \leq C(g,d,t)^{1+r}\lvert X \rvert^2,\qquad \lvert X+X \rvert \geq \left(C(g,d,t))^{-1}\right)^{1+r}\lvert X \rvert^2.\]
\end{theorem}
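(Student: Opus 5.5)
We argue by induction on $g$. By Remark~\ref{Cauchy_Schwarz} it suffices to prove the energy bound. The base case $g=0$ is trivial, since then $|X|\le 1$. So let $g\ge 1$ and assume Theorem~\ref{main_theorem_re} holds in all dimensions $<g$, uniformly in $(d,t)$. Then Proposition~\ref{induction_proposition} is available for every $B\in\Omega^{<g}$.

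First we pass from $X$ to $\mathcal X$. Write
\[E(X)=\sum_u N_u^2\le \sum_u \mathcal N_u^2 .\]
The sum over $u\in\Sigma$ is handled crudely. Since $\mathcal N_u\le |\mathcal X|\le dt|X|$ (for fixed $P$, the points $Q$ with $f(Q)=u-f(P)$ number at most $d$), Proposition~\ref{Sigma_finite} gives
\[\sum_{u\in\Sigma}\mathcal N_u^2\le c_6(g,d,t)\,d^2t^2|X|^2 .\]

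For $u\notin\Sigma$ we have $\Omega_u'=\Omega_u^{<g}$. The inequality $\mathcal N_u\le \mathcal N_u^\circ+\sum_{B}\sum_j \mathcal N_{u,B,j}$ contains at most $1+c_1 c_3^{1+r}$ terms, by Proposition~\ref{Bounding_proposition}(i),(iii). Cauchy--Schwarz therefore gives
\[\mathcal N_u^2\le (1+c_1c_3^{1+r})\Big((\mathcal N_u^\circ)^2+\sum_{B\in\Omega_u^{<g}}\sum_j \mathcal N_{u,B,j}^2\Big).\]
Summing the second part over $u$ and over $B\in\Omega^{<g}$ uses Proposition~\ref{Omega_finite} ($|\Omega|\le c_5$) and Proposition~\ref{induction_proposition}. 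This bounds it by $c_5c_7^{1+r}|X|^2$.

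The remaining term is $\sum_u(\mathcal N_u^\circ)^2$, and it is the main point. We have $\mathcal N_u^\circ\le c_4^{1+r}$ by Proposition~\ref{Bounding_proposition}(iv). Hence
\[\sum_u(\mathcal N_u^\circ)^2\le c_4^{1+r}\sum_u \mathcal N_u^\circ\le c_4^{1+r}\sum_u\mathcal N_u=c_4^{1+r}|\mathcal X|^2\le c_4^{1+r}d^2t^2|X|^2,\]
since the sets counted by $\mathcal N_u$ partition $\mathcal X^2$ as $u$ varies.

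The pieces combine to give $E(X)\le C(g,d,t)^{1+r}|X|^2$ after enlarging constants; the factor $(1+c_1c_3^{1+r})$ multiplied by the other $(\cdot)^{1+r}$ terms is again of the form $C^{1+r}$. One point needs checking: the induction hypothesis is applied with parameters $(d_1,t_1)$ bounded in terms of $(g,d,t)$. We take $C(g',d',t')$ monotone, replacing it by a maximum over the finitely many admissible smaller values. The rank is at most $r$, so the constant $c_8$ in Proposition~\ref{induction_proposition} depends only on $(g,d,t)$. Finally, the sumset bound follows from Remark~\ref{Cauchy_Schwarz}.
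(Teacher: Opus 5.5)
Your proof is correct and follows the paper's argument essentially step for step. It uses the same split into $u\in\Sigma$ and $u\notin\Sigma$, the same bound $\sum_u(\mathcal N_u^\circ)^2\le c_4^{1+r}|\mathcal X|^2$, and the same use of Propositions~\ref{Omega_finite} and~\ref{induction_proposition}, with only cosmetic differences: a trivial $g=0$ base case (which works because $\Omega^{<1}=\emptyset$) and a single Cauchy--Schwarz in place of the paper's two.

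One small slip: $\mathcal N_u\le|\mathcal X|$ is false in general, since several $Q$ in one fiber of $f$ can pair with the same $P$ (e.g.\ when $f(P)=f(-P)$), so only $\mathcal N_u\le dt\,|\mathcal X|$ holds. For $u\in\Sigma$ you can simply use $N_u\le|X|$ as the paper does, and only the constants change.
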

\begin{proof}
    We will use induction on $g$. If $g=1$, then $A$ is an elliptic curve. In particular, $A$ is simple and \cite[Theorem~1.1]{Cho26} implies the theorem in the base case. We assume that the theorem is true for all dimensions less than $g$.

    We begin with 
    \[E(X) = \sum_{u \in X+X} N_u^2\]
    where $N_u$ is defined by 
    \[N_u := \lvert \{(a,b) \in X^2\:|\:a+b=u\} \rvert.\]
    
    First, for sum over $u \in \Sigma$, Proposition~\ref{Sigma_finite} gives 
    \begin{equation}\label{thm_eq1}
        \sum_{u \in \Sigma} N_u^2 \leq \sum_{u \in \Sigma} \lvert X \rvert^2 \leq c_6(g,d,t) \cdot \lvert X \rvert^2.
    \end{equation}

    Next, we estimate the sum 
    \[\sum_{u \notin \Sigma} N_u^2.\]
    Using the notations defined in Section~\ref{Setup}, we obtain 
    \begin{equation}\label{thm_eq2}
        \sum_{u \notin \Sigma} N_u^2 \leq \sum_{u \notin \Sigma} \mathcal{N}_u^2 \leq 2\sum_{u \notin \Sigma} (\mathcal{N}_u^\circ)^2 + 2\sum_{u \notin \Sigma} (\mathcal{N}_u^\bullet)^2.
    \end{equation}

    By Proposition~\ref{Bounding_proposition}~(iv), the first sum is estimated by 
    \begin{equation}\label{thm_eq3}
        \sum_{u \notin \Sigma} (\mathcal{N}_u^\circ)^2 \leq c_4(g,d,t)^{1+r} \sum_{u} \mathcal{N}_u = c_4(g,d,t)^{1+r} \lvert \mathcal{X} \rvert^2 \leq c_{10}(g,d,t)^{1+r} \lvert X \rvert^2.
    \end{equation}

    For the second sum, we begin with 
    \[\mathcal{N}_u^\bullet \leq \sum_{B \in \Omega_u'} \sum_{j=1}^{n_{u,B}} \mathcal{N}_{u,B,j}.\]
    By Cauchy-Schwarz inequality with Proposition~\ref{Bounding_proposition}~(i) and (iii), 
    \[(\mathcal{N}_u^\bullet)^2 \leq c_1(g,d,t)c_3(g,d,t)^{1+r} \sum_{B \in \Omega_u'} \sum_{j=1}^{n_{u,B}} \mathcal{N}_{u,B,j}^2.\]
    Therefore, 
    \begin{equation}\label{thm_eq4}
        \sum_{u \notin \Sigma} (\mathcal{N}_u^\bullet)^2 \leq c_{11}(g,d,t)^{1+r} \sum_{u \notin \Sigma} \sum_{B \in \Omega_u'} \sum_{j=1}^{n_{u,B}} \mathcal{N}_{u,B,j}^2.
    \end{equation}
    Since $u \notin \Sigma$, $\Omega_u^g$ is empty and $\Omega_u' = \Omega_u^{<g}$. Recall that the union $\Omega^{<g}$ of all $\Omega_u^{<g}$ is finite by Proposition~\ref{Omega_finite}. Therefore, we may estimate the right sum in \eqref{thm_eq4} by 
    \begin{equation}\label{thm_eq5}
        \sum_{u \notin \Sigma} \sum_{B \in \Omega_u'} \sum_{j=1}^{n_{u,B}} \mathcal{N}_{u,B,j}^2 \leq \sum_{B \in \Omega^{<g}} \sum_u \sum_{j=1}^{n_{u,B}} \mathcal{N}_{u,B,j}^2.
    \end{equation}
    By Proposition~\ref{induction_proposition} and Proposition~\ref{Omega_finite}, the right sum in \eqref{thm_eq5} can be estimated by  
    \begin{equation}\label{thm_eq6}
        \sum_{B \in \Omega^{<g}} \sum_u \sum_{j=1}^{n_{u,B}} \mathcal{N}_{u,B,j}^2 \leq c_5(g,d,t)c_7(g,d,t)^{1+r}\lvert X \rvert^2.
    \end{equation}
    Combining \eqref{thm_eq4}, \eqref{thm_eq5}, and \eqref{thm_eq6} gives 
    \begin{equation}\label{thm_eq7}
        \sum_{u \notin \Sigma} (\mathcal{N}_u^\bullet)^2 \leq c_{12}(g,d,t)^{1+r}\lvert X \rvert^2.
    \end{equation}

    Combining \eqref{thm_eq2}, \eqref{thm_eq3}, and \eqref{thm_eq7} implies 
    \begin{equation}\label{thm_eq8}
        \sum_{u \notin \Sigma} N_u^2 \leq c_{13}(g,d,t)^{1+r}\lvert X \rvert^2.
    \end{equation}

    Now combining \eqref{thm_eq1} and \eqref{thm_eq8} ends the proof for the upper bound for $E(X)$. The lower bound for $\lvert X+X \rvert$ follows by Remark~\ref{Cauchy_Schwarz}.
\end{proof}

\end{document}